\documentclass{amsart}
\usepackage{amsmath,amssymb,amsthm}
\usepackage[dvipsnames]{xcolor}
\usepackage{bbm}
\usepackage[colorlinks=true, urlcolor=blue,citecolor=OliveGreen,menucolor=]{hyperref}

\DeclareMathOperator{\ad}{\textnormal{ad}}
\DeclareMathOperator{\Ad}{\textnormal{Ad}}
\DeclareMathOperator{\Ran}{\textnormal{Ran}}

\DeclareMathOperator{\id}{\textnormal{id}}

\newcommand{\C}{\mathbb{C}}
\newcommand{\Z}{\mathbb{Z}}
\newcommand{\A}{\mathfrak{A}}
\newcommand{\K}{\mathfrak{k}}
\renewcommand{\H}{\mathcal{H}}
\newcommand{\Id}{\mathbbm 1}
\newcommand{\g}{\mathfrak g}
\renewcommand{\k}{\mathfrak k}

\newcommand{\M}{\mathcal{M}}

\newcommand{\N}{\mathcal{N}}

\newcommand{\lgk}{\mathcal{A}(G,K)}
\newcommand{\lcgk}{\mathcal{AC}(G,K)}

\newcommand{\J}{\mathcal{J}}




\newtheorem*{theorem*}{Theorem}
\newtheorem{theorem}{Theorem}

\newtheorem{proposition}[theorem]{Proposition}

\theoremstyle{definition}
\newtheorem{definition}[theorem]{Definition}

\newtheorem{remark}[theorem]{Remark}
\numberwithin{theorem}{section}
\numberwithin{equation}{section}

\begin{document}

\title{Nijenhuis operators on homogeneous spaces related to $C^*$-algebras}

\author{Tomasz Goli\'nski}
\address{University of Bia{\l}ystok, Faculty of Mathematics\\
    Ciołkowskiego 1M, 15-245 Bia{\l}ystok, Poland}
\email{tomaszg@math.uwb.edu.pl}

\author{Gabriel Larotonda}
\address{Universidad de Buenos Aires and CONICET, Argentina}
\email{glaroton@dm.uba.ar}

\author{Alice Barbora Tumpach}
\address{Institut CNRS Pauli, Wien, Technische Universität Wien, and Laboratoire Paul Painlev\'e, Villeneuve d'Ascq}
\email{barbara.tumpach@math.cnrs.fr}

\begin{abstract}For a unital non-simple $C^*$-algebra $\A$ we consider its Banach--Lie group $G$ of invertible elements. For a given closed ideal $\K$ in $\A$, we consider the embedded Banach--Lie subgroup $K$ of $G$ of elements differing from the unit element by an element in $\K$. We study vector bundle maps of the tangent space of the homogeneous space $G/K$, induced by an admissible bounded operator on $\A$. In particular, we discuss when this vector bundle map is a Nijenhuis operator in $G/K$. The special case of almost complex structures in $G/K$ is also addressed. Examples for particular classes of $C^*$-algebras are presented, including the Toeplitz algebra and crossed products by $\Z$. 
\end{abstract}

\keywords{Nijenhuis operator; $C^*$-algebra; homogeneous space; Banach--Lie group; Toeplitz algebra; crossed product.}

\maketitle

\section{Introduction}

The aim of this work is to apply the results of \cite{GLT-NN} to the study of certain homogeneous spaces related to unital non-simple $C^*$-algebras. In the aforementioned  paper, we consider the homogeneous space $G/K$ where $G$ is a Banach--Lie group and $K$ an embedded (not necessarily split) Banach--Lie subgroup  of $G$.  We study homogeneous vector bundle maps $\N:T(G/K)\to T(G/K)$, which are induced by admissible bounded operators on the Banach--Lie algebra $\g$ of the group $G$. In particular we compute the Nijenhuis torsion and derive the conditions for $\N$ to be a Nijenhuis operator.

Nijenhuis operators are useful in many areas of mathematics and mathematical physics, including the integrability of almost complex structures \cite{eckmann51,newlander,beltita05integrability} or the study of integrable systems, see e.g. \cite{magri84,magri90,bols-bor} and the references therein. For example they describe certain deformations of Poisson brackets on the manifold via so called Poisson--Nijenhuis structures with further links to Poisson--Lie groups \cite{magri90}.
They are also studied separately, within the framework of so-called Nijenhuis geometry, see e.g. the series of recent papers \cite{bolsinov-n1,bolsinov-na5} and references therein. A nice review of the story around Nijenhuis torsion can be read in \cite{kosmann-bial}.

In this work we consider a $C^*$-algebra $\A$ with a unit element $\Id$ and a fixed closed two-sided ideal $\K$. In this situation the quotient space $\A/\k$ is also a $C^*$-algebra. The group $G$ is chosen to be the open set $\A^\times$ of all invertible elements in $\A$ and the subgroup $K$ is chosen to be $(\Id+\k)^\times$. It follows that $G/K$ is a homogeneous space and even more, it is also a Banach--Lie group, since the subgroup is normal, see Proposition~\ref{prop:quotient_group}. One can also consider the group $(\A/\k)^\times$ of invertible elements in the quotient $C^*$-algebra and the relationship between $(\A/\k)^\times$ and $\A^\times/(\Id+\k)^\times$ is presented in Proposition~\ref{prop:isomorphism}.

To illustrate the situation we propose three classes of possible admissible operators on $\A$. The first class is obtained by considering a bounded functional $\ell$ vanishing on the ideal $\K$, and its extensions to $\A$ by means  of the Hahn--Banach theorem; this leads to a rank one admissible operator in $\A$. The second class of operators  is given by left and right multiplication by elements from $\A$, and the last class is given by the adjoint operator $\ad_d$ for some $d\in\A$.

These possible ways of obtaining Nijenhuis operators are applied to four classes of unital non-simple $C^*$-algebras. The first two are the most standard examples: bounded operators on a separable  Hilbert space and continuous functions $C(X)$ on a compact set $X$. We also describe the situation in the case of the Toeplitz algebra of the unit circle $\mathbb T$ with continuous symbols, and in the case of crossed products of $C(X)$ and $\Z$ by a homeomorphism $\alpha:X\to X$.

Other possible applications of the results of the paper \cite{GLT-NN} to the study of unitary orbits are considered in \cite{GLT-kahler}.

The structure of this paper is as follows. Section~\ref{sec:basic} is devoted to the basic notions and facts, which were proven in \cite{GLT-NN}. No proofs were included in this text, but they can be found in the referenced paper. Subsequently Section~\ref{sec:homog} deals with $C^*$-algebras and proves the basic properties of homogeneous spaces in this context. It also contains subsections detailing possible classes of admissible operators on $\A$. The last section, Section~\ref{sec:cstar}, discusses particular classes of $C^*$-algebras.

\section{Basic results}\label{sec:basic}

This section introduces the notation and reiterates the results of the paper \cite{GLT-NN}. This presentation is abbreviated by necessity, but tries to be self contained. However all required details and proofs can be found in the aforementioned paper.

First of all if $f:\M_1\to \M_2$ is a smooth map between two smooth manifolds, its differential will be denoted by $f_*:T\M_1\to T\M_2$, or if there is a need to specify the point $m\in\M_1$, it will be denoted $f_{*m}:T_m\M_1\to T_{f(m)}\M_2$. Sometimes we will omit specifying the point when its clear from the context. By immersion (resp. submersion) we will mean a map such that the differential at every point is an injection with closed image (resp. a surjection).

Let $G$ be a real Banach--Lie group with Banach--Lie algebra $\g$. 
\begin{itemize}
    \item The left and right multiplication by elements $g\in G$ will be denoted as $l_g(h):=gh$ and $r_g(h):=hg$.
    \item The differential $(l_g)_{*1}$ will be denoted as $L_g$ and $R_g = (r_g)_{*1}$. 
    \item The adjoint map on the Lie algebra (the differential of conjugation) is denoted as $\Ad_g$ i.e. $\Ad_g= L_gR_{g^{-1}}$. 
    \item The Lie bracket in $\g$ will be denoted as $[v,w]=\ad_v w$, where $\ad=(\Ad)_{*1}$.
\end{itemize}

\begin{definition}[Homogeneous spaces]\label{homs} Let $K$ be an immersed Banach--Lie subgroup of $G$ with Banach--Lie subalgebra $\k\subset \g$. We say that 
$G/K$ is a \emph{homogeneous space} of $G$ if the quotient space for the right action of $K$ on $G$
has a Hausdorff Banach manifold structure such that the quotient map $\pi: G \rightarrow G/K$, $g\mapsto gK$ is a submersion.
\end{definition}

In the quotient $G/K$ there is a distinguished point, called the base point, which we will denote as $p_0=\pi(K)$. Moreover the group $G$ acts in a natural way on $G/K$ as follows:
\[\alpha:G\times G/K\to G/K \qquad \alpha(g,p)=\pi(gh)\]
for $p=\pi(h)\in G/K$. For a fixed $g\in G$, we denote by $\alpha_g$ the mapping $\alpha_g(p)=\alpha(g,p)$.

Next we discuss homogeneous vector bundle maps on the tangent bundle $T(G/K)$ of the homogeneous space. In particular we are interested in the ones that come from a bounded linear map defined at the level of the Banach--Lie algebra $\mathfrak g$. All vector bundle maps in this paper are always covering identity, i.e. they map each fiber to itself.

\begin{definition}[Homogeneous bundle maps]\label{hls}A smooth vector bundle map $\N:T(G/K)\to T(G/K)$ covering identity is called \emph{homogeneous} if it is invariant with respect to the action by automorphisms~$\alpha_g$:
\[
    (\alpha_g)_{*}\, \N_p=\N_{\alpha_g(p)}(\alpha_g)_{*} \qquad \textit{ for any }p\in G/K \textit{ and any }g\in G.
\]
\end{definition} 
By homogeneity, any such map comes from some $\N_{p_0}\in \mathcal B(T_{p_0}(G/K))$ at the base point $p_0$. 

Now, we look at the situation from the side of the Lie algebra $\mathfrak g$ of the group $G$. Namely we are interested in a set of bounded operators on $\g$, which descend to a homogeneous bundle map $\N:T(G/K)\to T(G/K)$.

\begin{definition}[Admissible operators]\label{homj}The set of admissible operators $\lgk$ on $\mathfrak g$ is defined as the set of $N\in  \mathcal B(\mathfrak g)$ satisfying the following conditions
\begin{equation}\label{eq:admissible1}
N\k\subset \k,
\end{equation}
\begin{equation}\label{eq:admissible2}
\Ran(\Ad_k N-N\Ad_k)\subset \k,
\end{equation}
where $\mathcal B(\g)$ denotes the set of linear bounded operators on $\g$.
\end{definition}

Condition \eqref{eq:admissible1} ensures that $N$ correctly defines an operator $\N_{p_0}\in \mathcal B(T_{p_0}(G/K))$ at the base point $p_0$. The condition \eqref{eq:admissible2} ensures on the other hand that $\N_{p_0}$ can be propagated to the whole tangent bundle $T(G/K)$. Summarizing we get:

\begin{proposition}\label{prop:induced_homog_struct}
An admissible operator $N\in \lgk$ induces
the homogeneous vector bundle map $\N:T(G/K) \to T(G/K)$ given at each $p=\pi(g)\in G/K$ by
\begin{equation}\label{Np}
\N_p = (\alpha_g)_* \N_{p_0} \, (\alpha_g)_*^{-1},
\end{equation}
where $\N_{p_0}: T_{p_0}G/K \rightarrow T_{p_0}G/K$ is defined as 
\begin{equation}\label{Np0}
\N_{p_0}\pi_{*1}v:=\pi_{*1}N v
\end{equation}
for $v\in\mathfrak g$.
\end{proposition}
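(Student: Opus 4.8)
The plan is to verify that the formula \eqref{Np} is well defined—i.e.\ independent of the choice of representative $g$ of the coset $p=\pi(g)$—and that the resulting family $\{\N_p\}$ is smooth, covers the identity, and satisfies the homogeneity condition of Definition~\ref{hls}. First I would address the base point operator $\N_{p_0}$: since $\pi_{*1}:\g\to T_{p_0}(G/K)$ is a surjection (as $\pi$ is a submersion) with kernel $\k$, formula \eqref{Np0} determines a well-defined bounded operator on $T_{p_0}(G/K)$ precisely when $N$ descends to the quotient $\g/\k$, which is exactly condition \eqref{eq:admissible1}, $N\k\subset\k$. The boundedness of $\N_{p_0}$ follows from the open mapping theorem applied to $\pi_{*1}$, viewing $T_{p_0}(G/K)$ as $\g/\k$.

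Next I would tackle the main point: well-definedness of \eqref{Np}. If $\pi(g)=\pi(g')$ then $g'=gk$ for some $k\in K$, so $\alpha_{g'}=\alpha_g\circ\alpha_k$ and $(\alpha_{g'})_* = (\alpha_g)_*(\alpha_k)_*$. Thus I must show $(\alpha_k)_*\N_{p_0}(\alpha_k)_*^{-1}=\N_{p_0}$ for all $k\in K$, i.e.\ that $\N_{p_0}$ commutes with the isotropy action of $K$ at $p_0$. Unwinding the identification $T_{p_0}(G/K)\cong\g/\k$, the isotropy action of $k\in K$ is the map induced on $\g/\k$ by $\Ad_k$ (this uses $\Ad_k\k\subset\k$, valid since $K$ is a subgroup with Lie algebra $\k$, so $\Ad_k$ indeed descends). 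The commutation $\overline{\Ad_k}\,\N_{p_0}=\N_{p_0}\,\overline{\Ad_k}$ on $\g/\k$ is then equivalent to $\pi_{*1}(\Ad_k N-N\Ad_k)v=0$ for all $v\in\g$, which is exactly condition \eqref{eq:admissible2}, $\Ran(\Ad_k N-N\Ad_k)\subset\k$. This is the heart of the argument and the step I expect to be the main obstacle, mostly because it requires carefully setting up the identification of $T_{p_0}(G/K)$ with $\g/\k$ and the description of the isotropy representation in those terms—routine in finite dimensions but needing the submersion hypothesis and boundedness bookkeeping in the Banach setting.

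Finally I would check the remaining properties. Smoothness of $p\mapsto\N_p$ follows from \eqref{Np} together with the smoothness of the action $\alpha$ and a local section of $\pi$ (which exists since $\pi$ is a submersion): on a neighborhood of any point one writes $\N_p=(\alpha_{\sigma(p)})_*\N_{p_0}(\alpha_{\sigma(p)})_*^{-1}$ for a smooth local section $\sigma$, manifestly smooth. That $\N$ covers the identity is immediate from \eqref{Np}, since $(\alpha_g)_*$ maps $T_{p_0}(G/K)$ to $T_{\alpha_g(p_0)}(G/K)=T_p(G/K)$. Homogeneity is then a direct computation: for $h\in G$ and $p=\pi(g)$ we have $\alpha_h(p)=\pi(hg)$, so $\N_{\alpha_h(p)}=(\alpha_{hg})_*\N_{p_0}(\alpha_{hg})_*^{-1}=(\alpha_h)_*(\alpha_g)_*\N_{p_0}(\alpha_g)_*^{-1}(\alpha_h)_*^{-1}=(\alpha_h)_*\N_p(\alpha_h)_*^{-1}$, using $\alpha_{hg}=\alpha_h\circ\alpha_g$; this is precisely the identity in Definition~\ref{hls}. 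It remains only to note that \eqref{Np} at $p=p_0$ (take $g=1$) recovers \eqref{Np0}, so the two formulas are consistent.
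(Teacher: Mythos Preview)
The paper does not give its own proof of this proposition: Section~\ref{sec:basic} explicitly states that it ``reiterates the results of the paper \cite{GLT-NN}'' and that ``no proofs were included in this text, but they can be found in the referenced paper.'' So there is no in-paper argument to compare your proposal against.

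That said, your argument is the natural one and is correct in its essentials. Condition~\eqref{eq:admissible1} is precisely what makes \eqref{Np0} well defined on $\g/\k\cong T_{p_0}(G/K)$; condition~\eqref{eq:admissible2} is precisely the statement that $\N_{p_0}$ commutes with the isotropy representation, which gives independence of \eqref{Np} from the choice of representative $g$; and homogeneity then follows formally from $\alpha_{hg}=\alpha_h\circ\alpha_g$. This is exactly how one expects the proof in \cite{GLT-NN} to run.

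One point deserves a little more care. You justify smoothness of $p\mapsto\N_p$ by invoking a smooth local section $\sigma$ of $\pi$. In this paper (and in \cite{GLT-NN}) the subgroup $K$ is allowed to be \emph{non-split}, and the working definition of submersion is merely that $\pi_{*g}$ is surjective; in the Banach setting surjectivity of the differential alone does not guarantee local sections via the implicit function theorem, since the kernel $\k$ need not be complemented in $\g$. A safer route is to argue smoothness upstairs: the map $G\times\g\to T(G/K)$, $(g,v)\mapsto (\alpha_g)_*\pi_{*1}v$, is smooth and surjective onto each fibre, and under this map $\N$ lifts to $(g,v)\mapsto (\alpha_g)_*\pi_{*1}Nv$, which is manifestly smooth; since the quotient map $G\times\g\to T(G/K)$ already encodes the manifold structure of $T(G/K)$, smoothness of $\N$ follows. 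Alternatively one may use that $G/K$ is itself a Banach--Lie group in the cases treated here (Proposition~\ref{prop:quotient_group}), so charts are available directly. Either way the conclusion stands; only the justification you gave for this one step needs to be replaced or supplemented.
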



\begin{definition}[Nijenhuis torsion]Let $\M$ be any smooth manifold and let $\N:T\M\to T\M$ be a smooth vector bundle map. The \emph{Nijenhuis torsion} of $\N$ is defined as 
\[
\Omega_{\N}(X,Y)=\N [\N X,Y]+\N [X,\N Y]-[\N X,\N Y]-\N^{\; 2}[X,Y]
\]
for $X,Y$ vector fields in $\mathcal M$. We say that $\N$ is a \emph{Nijenhuis operator} in $\mathcal M$ if its torsion vanishes.
\end{definition}

It is known that the value of $\Omega_\N$ at a given point actually depends only on the values of the vector fields in that point. For $\N$ induced by an admissible operator $N$, it would be more convenient for our purpose to express $\Omega_\N$ in terms of an admissible operator $N$.

\begin{theorem}\label{torsionJ}Let $G/K$ be a homogeneous space and consider an admissible operator $N\in\lgk$. Let $X,Y$ be vector fields on $G/K$. Let $p=\pi(g)$ and choose $v,w\in \mathfrak g$ such that $X(p)=(\alpha_g)_*\pi_{*1}v$ and $Y(p)=(\alpha_g)_*\pi_{*1}w$. Then the Nijenhuis torsion of $\N$ can be expressed as
$$
\Omega_{\N}(X,Y)(p)=(\alpha_g)_*\pi_{*1}\big(N[v,N w]+N[N v,w]-[N v,N w]-N^2[v,w]\big).
$$
In particular $\N$ is a Nijenhuis operator if and only if
\begin{equation}\label{localtor}
N[v,N w]+N[N v,w]-[N v,N w]-N^2[v,w]\in \mathfrak k
\end{equation}
for all $v,w\in\mathfrak g$.
\end{theorem}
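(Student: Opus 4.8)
The plan is to reduce the computation of $\Omega_{\N}$ at the point $p=\pi(g)$ to the base point $p_0$ by exploiting the homogeneity of $\N$ and the fact that $\alpha_g$ is a diffeomorphism of $G/K$. Recall that the Nijenhuis torsion is natural with respect to diffeomorphisms: if $\phi:\M\to\M$ is a diffeomorphism and $\N'=\phi_*\N\phi_*^{-1}$, then $\Omega_{\N'}(\phi_*X,\phi_*Y)=\phi_*\Omega_{\N}(X,Y)$. Since $\N_p=(\alpha_g)_*\N_{p_0}(\alpha_g)_*^{-1}$ by Proposition~\ref{prop:induced_homog_struct}, and since $\Omega_{\N}(X,Y)(p)$ depends only on the pointwise values $X(p),Y(p)$, it suffices to compute $\Omega_{\N}$ at $p_0$ applied to any vector fields whose values at $p_0$ are $\pi_{*1}v$ and $\pi_{*1}w$. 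Concretely, writing $X(p)=(\alpha_g)_*\pi_{*1}v$ and $Y(p)=(\alpha_g)_*\pi_{*1}w$, applying naturality with $\phi=\alpha_g$ gives
\[
\Omega_{\N}(X,Y)(p)=(\alpha_g)_*\,\Omega_{\N}(\tilde X,\tilde Y)(p_0),
\]
where $\tilde X,\tilde Y$ are vector fields with $\tilde X(p_0)=\pi_{*1}v$, $\tilde Y(p_0)=\pi_{*1}w$. So the whole problem is reduced to evaluating $\Omega_{\N}$ at the base point.

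Next I would compute $\Omega_{\N}(\tilde X,\tilde Y)(p_0)$ using a convenient choice of representative vector fields. The natural choice is to take the fundamental vector fields of the $G$-action: for $\xi\in\g$, let $\xi^\sharp$ be the vector field on $G/K$ generated by the one-parameter subgroup $\exp(t\xi)$, so that $\xi^\sharp(p_0)=\pi_{*1}\xi$. Picking $\xi,\eta\in\g$ with $\pi_{*1}\xi=\pi_{*1}v$, $\pi_{*1}\eta=\pi_{*1}w$ (one may simply take $\xi=v$, $\eta=w$), I would use the standard facts that $[\xi^\sharp,\eta^\sharp]=-[\xi,\eta]^\sharp$ (or $+$, depending on sign conventions; this only affects bookkeeping) and that $\N\xi^\sharp$ agrees at $p_0$ with $(Nv)^\sharp$ since $\N_{p_0}\pi_{*1}v=\pi_{*1}Nv$. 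Plugging these into the defining formula for the Nijenhuis torsion and evaluating at $p_0$, every term of $\Omega_{\N}(\xi^\sharp,\eta^\sharp)(p_0)$ becomes $\pi_{*1}$ applied to the corresponding bracket expression in $\g$, yielding
\[
\Omega_{\N}(\xi^\sharp,\eta^\sharp)(p_0)=\pi_{*1}\big(N[v,Nw]+N[Nv,w]-[Nv,Nw]-N^2[v,w]\big).
\]
Combining with the reduction from the previous paragraph gives the claimed formula. The final ``in particular'' statement then follows because $(\alpha_g)_*$ and $\pi_{*1}$ have the property that $(\alpha_g)_*\pi_{*1}u=0$ iff $u\in\k$: indeed $\pi_{*1}:\g\to T_{p_0}(G/K)$ has kernel exactly $\k$ (as $\pi$ is a submersion with $\ker\pi_{*1}=\k$), and $(\alpha_g)_*$ is a linear isomorphism on tangent spaces. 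Hence $\Omega_{\N}\equiv 0$ iff the bracket expression lies in $\k$ for all $v,w$, and since $\Omega_{\N}(X,Y)(p)$ depends only on $X(p),Y(p)$ and the action is transitive, vanishing at all $p$ is equivalent to vanishing at $p_0$ for all $v,w\in\g$.

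The main technical point — and the step requiring the most care — is the interchange between pointwise-defined bundle maps and the global vector-field identities needed to apply the Nijenhuis formula. One must justify that $\N$ applied to the fundamental vector field $\xi^\sharp$ coincides, at least at $p_0$ (which is all that is needed since $\Omega_\N$ is tensorial), with the fundamental vector field $(Nv)^\sharp$; this follows from the explicit formula \eqref{Np} for $\N_p$ together with the $G$-equivariance $(\alpha_g)_*\xi^\sharp = (\Ad_g\xi)^\sharp$ of fundamental vector fields, but one should check the admissibility condition \eqref{eq:admissible2} is exactly what makes $\N\xi^\sharp$ and $(Nv)^\sharp$ agree not just at $p_0$ but compatibly enough along the orbit for the Lie-bracket terms $[\N\xi^\sharp,\eta^\sharp]$ etc.\ to evaluate correctly at $p_0$. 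Once this compatibility is in place, the remaining computation is a routine substitution into the torsion formula using $[\xi^\sharp,\eta^\sharp]=\pm[\xi,\eta]^\sharp$, with the only subtlety being consistent sign conventions, which do not affect the final symmetric expression.
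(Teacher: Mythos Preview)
The paper does not prove this theorem: Section~\ref{sec:basic} explicitly quotes it from \cite{GLT-NN} and states that ``No proofs were included in this text, but they can be found in the referenced paper.'' So there is nothing here to compare your argument against.

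Assessing your sketch on its own merits: the reduction to $p_0$ via homogeneity and naturality of the torsion is correct, and using fundamental vector fields $\xi^\sharp$ is the right move. However, the step you flag as ``the main technical point'' is a genuine gap, and your proposed resolution is wrong. You suggest that condition~\eqref{eq:admissible2} forces $\N v^\sharp$ and $(Nv)^\sharp$ to agree well enough near $p_0$ for the bracket substitution to go through. It does not: \eqref{eq:admissible2} only controls $\Ad_k$ for $k\in K$, while moving off $p_0$ requires $g\notin K$. Concretely, for $p=\pi(g)$ one has
\[
\N v^\sharp(p)=(\alpha_g)_*\pi_{*1}\,N\Ad_{g^{-1}}v,\qquad (Nv)^\sharp(p)=(\alpha_g)_*\pi_{*1}\,\Ad_{g^{-1}}Nv,
\]
and their difference has nonzero first derivative at $g=1$, namely $\pi_{*1}(\ad_\zeta N-N\ad_\zeta)v$ in the direction $\zeta$. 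In fact a direct computation with the flow of $w^\sharp$ gives
\[
[\N v^\sharp,w^\sharp](p_0)=-\pi_{*1}\,N[v,w],
\]
which is \emph{not} the $-\pi_{*1}[Nv,w]$ your substitution would predict. So the term-by-term identification you propose is simply false.

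What saves the day is that when you carry all four terms honestly, the ``wrong'' intermediate values reorganize into the stated expression. Using the same flow computation one finds $[v^\sharp,\N w^\sharp](p_0)=-\pi_{*1}N[v,w]$ and, after splitting $\N w^\sharp=(Nw)^\sharp+Z_w$ with $Z_w(p_0)=0$ and using that the bracket with a field vanishing at $p_0$ depends only on the pointwise value of the other field,
\[
[\N v^\sharp,\N w^\sharp](p_0)=\pi_{*1}\bigl(-N[v,Nw]-N[Nv,w]+[Nv,Nw]\bigr).
\]
Substituting these into the torsion formula and simplifying yields exactly $\pi_{*1}\bigl(N[v,Nw]+N[Nv,w]-[Nv,Nw]-N^2[v,w]\bigr)$, after which your final paragraph (kernel of $\pi_{*1}$ is $\k$, $(\alpha_g)_*$ is an isomorphism) correctly gives the ``if and only if''. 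So your outline can be completed, but not by the shortcut you had in mind; the honest bracket computation is unavoidable.
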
 

Let us now present the special case when $\N$ is an almost complex structure.

\begin{definition}[Homogeneous almost complex structure]A \emph{homogeneous almost complex structure} is a homogeneous vector bundle map $\J$ in $G/K$ (Definition \ref{hls}) with the additional requirement that $\J^2=-1$. 
\end{definition}

To get an almost complex structure out of the admissible linear bounded operators on $\g$ we need to consider the following.

\begin{definition}\label{admJ}
Define the following subset of admissible operators on $\mathfrak g$
\[ \lcgk = \{ J\in \lgk \;|\; \Ran(J^2+1)\subset \mathfrak k\}.\]
\end{definition}

It is easy to observe that if $\J$ is induced by $J \in \lcgk$ it satisfies the condition $\J_{p_0}^2=-1$ in $T_{p_0}(G/K)$. By homogeneity it also holds in the whole tangent bundle $T(G/K)$.

By the Newlander--Nirenberg theorem (\cite{eckmann51,newlander}, in the real analytic Banach context proved in \cite[Theorem 7]{beltita05integrability}) an almost complex structure $\J$, for which the Nijenhuis torsion vanishes, is integrable, i.e. there exists a structure of complex Banach manifold on $G/K$ compatible with $\J$.

\section{Homogeneous spaces and admissible operators related to $C^*$-algebras}\label{sec:homog}
\subsection{General considerations}

\begin{definition}[$C^*$-algebra]
A \emph{$C^*$-algebra} is a complex Banach space $\A$ equipped with a linear multiplication and an anti-linear involution denoted by $*$ such that
\[ 
\|ab\| \leq \|a\| \|b\| ,\qquad (ab)^* = b^*a^*,\qquad  \|aa^*\|=\|a\|^2
\]
for all $a,b\in\A$. We say the $C^*$-algebra is unital if it contains a unit, which we will denote by $\Id$.
\end{definition}

The model example of a $C^*$-algebra is the set of all bounded operators $\mathcal B(\H)$ on a Hilbert space $\H$. In fact, due to the Gelfand--Naimark--Segal construction, every $C^*$-algebra can be seen as a closed $*$-subalgebra in $\mathcal B(\H)$ for some Hilbert space $\H$. 

In a unital $C^*$-algebra the set of invertible elements in $\A$ is open. Moreover since the product and the inverse map are smooth in the $C^*$-topology, the set of invertible operators is a Banach--Lie group, which we will denote by $G=\A^\times$. Naturally, its Banach--Lie algebra is $\g=\A$. Let $\exp:\A\to\A$ denote the exponential map of $\A$, defined as a power series, this map coincides with the Lie group exponential of $G=\A^{\times}$. 

Assume there exists a closed two-sided non-trivial ideal $\K$ in the $C^*$-algebra $\A$ (i.e. the algebra $\A$ is not simple). In this case $\K$ is automatically preserved by the involution $*$, see e.g. \cite[5.37]{douglas}. Moreover the quotient space $\A/\K$ always possesses a structure of a $C^*$-algebra.  

Fixing the ideal $\K$ we consider a subgroup $K$ of $\A^{\times}$ defined as the set of invertible elements which differs from the identity by an element in $\k$. Note that it is indeed a subgroup. To see this consider an element $\Id+k\in K$ and its inverse $a\in \A^\times$. From $a(\Id+k) = \Id$ it follows that $a\in \Id + \K$. Thus $K$ is actually equal to $\A^\times\cap (\Id + \K)$, which for brevity we will also denote as $(\Id+\K)^\times$,
\[ K = \A^\times\cap (\Id + \K) = (\Id + \K)^{\times}.\]

Let us denote by $Q:\A\to \A/\mathfrak k$ the linear quotient map and by $\pi:G\to G/K$ the group quotient map. 

We will be interested in the study of $G/K$ and homogeneous vector bundle maps on it. Note that in general $\K$ might not possess a complementary subspace in $\A$.

\begin{proposition}\label{prop:quotient_group}
The quotient space $G/K$ is a homogeneous space of $G$. Moreover it has a structure of Banach--Lie group with Lie algebra $\A/\k$ such that for the Lie group exponentials we have
\begin{equation}\label{natur-exp}
\exp_{G/K}\circ \, Q = \pi \circ \exp_G=\pi \circ \exp.
\end{equation}
\end{proposition}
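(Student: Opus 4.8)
The plan is to construct $G/K$ not by the usual slice method — which would require a closed complement of $\k$ in $\A$, something that need not exist (e.g. the compacts are uncomplemented in $\mathcal B(\H)$) — but by identifying $G/K$ with an explicit open subgroup of the Banach--Lie group $(\A/\k)^\times$ of invertibles of the quotient $C^*$-algebra, for which the model space $\A/\k$ is automatically a Banach space and the quotient map $Q$ is automatically open.

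I would first record the structural facts about $K$. Since $\k$ is a closed subspace of $\A$, the affine subspace $\Id+\k$ is closed, and $K=\A^\times\cap(\Id+\k)$ is an open subset of it; hence $K$ is a Banach--Lie group modeled on $\k$, and the inclusion $K\hookrightarrow G$ is a topological embedding whose derivative at every point is injective with closed image, so $K$ is an embedded Banach--Lie subgroup with Lie algebra $\k$. Because $\k$ is a two-sided ideal, $g(\Id+k)g^{-1}=\Id+gkg^{-1}\in\Id+\k$ for all $g\in\A^\times$, $k\in\k$, so $K$ is \emph{normal} in $G$; in particular $G/K$ carries a natural group structure.

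Next I would exploit the unital $C^*$-homomorphism $Q:\A\to\A/\k$. It maps invertibles to invertibles, hence restricts to a homomorphism $Q^\times:G=\A^\times\to(\A/\k)^\times$; this is smooth, being the restriction of the bounded linear map $Q$, and its derivative at every point is $Q$ itself, which is surjective. Its kernel is $\{a\in\A^\times:a-\Id\in\k\}=K$. Let $H:=Q^\times(G)$. Since $Q$ is open (open mapping theorem) and $G$ is open in $\A$, the subgroup $H$ is open in $(\A/\k)^\times$, hence closed, hence an embedded Banach--Lie subgroup with Lie algebra $\A/\k$. As $Q^\times:G\to H$ is a continuous open surjective homomorphism with kernel $K$, it induces a homeomorphism of topological groups $G/K\xrightarrow{\ \sim\ }H$; transporting the manifold and group structures of $H$ along this map makes $G/K$ a Hausdorff Banach--Lie group with Lie algebra $\A/\k$, and under the identification the quotient map $\pi$ becomes $Q^\times$, which is smooth with everywhere-surjective derivative $Q$, i.e. a submersion in the sense of this paper. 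The $G$-action then reads $\alpha(g,Q(h))=Q(g)Q(h)$ and is manifestly smooth, so $G/K$ is indeed a homogeneous space of $G$.

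Finally, the exponential identity is a formal consequence of multiplicativity: the Lie group exponentials of $\A^\times$ and $(\A/\k)^\times$ are the algebraic power series, so under the identification above $\exp_G=\exp$ and $\exp_{G/K}=\exp_{\A/\k}$, and since $Q$ is a continuous algebra homomorphism, $Q(\exp a)=\sum_{n\ge0}Q(a)^n/n!=\exp_{\A/\k}(Qa)$ for every $a\in\A$; this is exactly $\pi(\exp a)=\exp_{G/K}(Qa)$. (Alternatively it is the naturality of $\exp$ for the morphism $\pi$, whose derivative at $\Id$ is $Q$.) The only genuinely delicate point is the one flagged at the outset: since $\k$ may be uncomplemented, one cannot chart $G/K$ from a complement of $\k$, so the argument hinges on rerouting through $(\A/\k)^\times$ — which in particular means one should check, as above, that the paper's weak ("surjective derivative") notion of submersion is all that is required here.
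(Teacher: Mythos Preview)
Your argument is correct, but it takes a genuinely different route from the paper's. The paper simply observes that $K$ is a closed normal subgroup of $G$ and then invokes the general quotient theorem of Gl\"ockner--Neeb \cite[Theorem~II.2]{glockner-neeb03}, which supplies the Banach--Lie group structure on $G/K$ and the exponential naturality \eqref{natur-exp} in one stroke; the submersion property of $\pi$ then follows from $\pi_{*1}=Q$. You instead construct the manifold structure on $G/K$ by hand, routing through the ambient quotient algebra: you identify $G/K$ with the open subgroup $Q(\A^\times)$ of $(\A/\k)^\times$ via the open mapping theorem, and transport the structure from there. This is more elementary --- it avoids the black-box quotient theorem --- and it simultaneously proves most of the subsequent Proposition~\ref{prop:isomorphism} (your map $G/K\to H$ is exactly the paper's $\iota$), effectively reversing the logical dependency between the two propositions. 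The paper's approach, on the other hand, is shorter and would apply verbatim to any closed normal Banach--Lie subgroup, not just those arising from an ideal with a convenient target algebra $\A/\k$ to land in.
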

\begin{proof}
Since $\K$ is a two sided-ideal of $\A$, the subgroup $K$ is always normal. Indeed for any $g \in \A^\times$ we have
\[g\big(\A^\times\cap (\Id + \K)\big)g^{-1} = 
\A^\times\cap (gg^{-1} + g\K g^{-1}) = \A^\times\cap (\Id + \K).
\]
Moreover since $\K$ is closed, the subgroup $K$ is automatically embedded with respect to the norm topology.

The equality \eqref{natur-exp} follows from \cite[Theorem II.2]{glockner-neeb03} and the previous observations. In particular $\pi_{*1}=Q$ and  $\pi:G\to G/K$ is a smooth submersion. This shows that $G/K$ is a homogeneous space according to Definition~\ref{homs}. 
\end{proof}

Note that the Banach--Lie algebras of $G/K=\A^\times/(\Id+\k)^\times$ and of $(\A/\mathfrak k)^{\times}$ are both isomorphic to $\A/\mathfrak k$. 

\begin{proposition}\label{prop:isomorphism}
Let $\iota:\A^\times/(\Id+\k)^\times\to (\A/\k)^{\times}$ be defined as $\iota(\pi(a))=Q(a)=a+\mathfrak k$. Then $\iota$ is a Banach--Lie group isomorphism onto an open Banach--Lie subgroup of $(\A/\k)^{\times}$, and the identity components of $\A^\times/(\Id+\k)^\times$ and $(\A/\k)^{\times}$ are isomorphic (via $\iota$) as Banach--Lie groups. 
\end{proposition}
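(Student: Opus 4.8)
The plan is to treat $\iota$ first as a map of abstract groups, showing it is a well-defined injective homomorphism whose image is the subgroup $Q(\A^\times)\subseteq(\A/\k)^\times$, then to prove that this image is open, and finally to upgrade ``abstract isomorphism onto an open subgroup'' to an isomorphism of Banach--Lie groups by computing $\iota_{*1}$ with the help of Proposition~\ref{prop:quotient_group}.

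First I would check well-definedness and the homomorphism property. If $\pi(a)=\pi(b)$, then $b^{-1}a\in(\Id+\k)^\times$, and since $Q$ annihilates $\k$ we get $Q(a)=Q(b)Q(b^{-1}a)=Q(b)$; multiplicativity, $\iota(\pi(a)\pi(b))=Q(ab)=Q(a)Q(b)$, is immediate because $Q$ is an algebra morphism. For injectivity, $Q(a)=Q(b)$ gives $a-b\in\k$, hence $b^{-1}a-\Id=b^{-1}(a-b)\in\k$ since $\k$ is a two-sided ideal, so $b^{-1}a\in K$ and $\pi(a)=\pi(b)$. The image of $\iota$ equals $Q(\A^\times)$, which is a subgroup of $(\A/\k)^\times$: it is stable under products, and under inverses since $Q(a)^{-1}=Q(a^{-1})$. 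Moreover $Q(\A^\times)$ is \emph{open}: $Q\colon\A\to\A/\k$ is a Banach-space quotient map, hence an open mapping, and $\A^\times$ is open in $\A$, so $Q(\A^\times)$ is open in $\A/\k$, and a fortiori open in $(\A/\k)^\times$. Being an open subgroup it is also closed, and it inherits from $(\A/\k)^\times$ the structure of an embedded Banach--Lie subgroup, modelled on $\A/\k$.

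Next I would make $\iota$ into a morphism of Banach--Lie groups. Since $\pi\colon G\to G/K$ is a surjective submersion it admits local smooth sections $s$; on the domain of such an $s$ one has $\iota=\iota\circ\pi\circ s=Q\circ s$, which is smooth, so $\iota$ is smooth. Differentiating $\iota\circ\pi=Q$ at the identity and using $\pi_{*1}=Q$ from Proposition~\ref{prop:quotient_group} gives $\iota_{*1}\circ Q=Q$, whence $\iota_{*1}=\id_{\A/\k}$. Thus $\iota$ is a local diffeomorphism at the base point, and since $\iota\circ l_g=l_{\iota(g)}\circ\iota$ it is a local diffeomorphism everywhere, in particular an open map; being moreover an injective homomorphism onto $Q(\A^\times)$, it is an isomorphism of Banach--Lie groups onto this open subgroup. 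Finally, an open subgroup of a topological group contains the identity component (the identity component is connected and meets the open-and-closed set $Q(\A^\times)$ in a nonempty clopen piece), so the identity component of $(\A/\k)^\times$ coincides with that of $Q(\A^\times)$, and $\iota$ restricts to an isomorphism between the identity components of $\A^\times/(\Id+\k)^\times$ and $(\A/\k)^\times$.

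The one non-formal point is the openness of $Q(\A^\times)$, which rests on the open mapping theorem for the quotient map $Q$; this is also exactly the reason $\iota$ need not be surjective, since an invertible element of $\A/\k$ need not lift to an invertible element of $\A$, as already the Calkin algebra shows. Everything else is routine bookkeeping with the universal property of the submersion $\pi$ and the elementary topology of open subgroups.
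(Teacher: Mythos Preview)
Your overall strategy is sound and several parts of your argument are more explicit than the paper's (the openness of $Q(\A^\times)$ via the open mapping theorem, and the elementary clopen argument for the identity components). There is, however, one genuine gap.

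The problematic step is the sentence ``Since $\pi\colon G\to G/K$ is a surjective submersion it admits local smooth sections.'' In the Banach setting this inference is not valid under the paper's definition of submersion (surjectivity of the differential at each point). Local smooth sections of $\pi$ would force the existence, at the identity, of a bounded linear right inverse to $\pi_{*1}=Q$, i.e.\ a bounded linear splitting of $Q\colon\A\to\A/\k$; but the paper explicitly warns that $\k$ need not be complemented in $\A$, and in the Calkin example $\mathcal K(\H)\subset\mathcal B(\H)$ it is not. So you cannot deduce smoothness of $\iota$ from smoothness of $\iota\circ\pi=Q$ via sections.

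The paper circumvents this by working with exponential charts rather than sections: it proves the identity
\[
\exp_{(\A/\k)^\times}=\iota\circ\exp_{G/K}
\]
directly (using that $Q$ is a unital algebra morphism, so $\widetilde{\exp}\circ Q=Q\circ\exp$, together with the naturality $\exp_{G/K}\circ Q=\pi\circ\exp_G$ from Proposition~\ref{prop:quotient_group}). Since $\exp_{G/K}$ is a local diffeomorphism by the very construction of the Banach--Lie group structure on $G/K$, this exhibits $\iota$ locally as $\exp_{(\A/\k)^\times}\circ(\exp_{G/K})^{-1}$, which is smooth with $\iota_{*1}=\id_{\A/\k}$. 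Your computation $\iota_{*1}\circ Q=Q$ is correct once smoothness is known, but you should obtain smoothness by this exponential argument rather than by invoking sections. With that one replacement, the rest of your proof goes through unchanged.
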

\begin{proof}
The map $\iota$ does not depend on the choice of the representative since the relation $\pi(a) = \pi(b)$ is equivalent to the fact that there exists $g = \Id + k\in (\Id+\k)^\times$ with $k\in\k$, such that $b = ag$, hence
$$ \iota\big(\pi(b)\big) = ag + \k = a(\Id+k) + \k = a + ak +\k = a + \k = \iota\big(\pi(a)\big).$$

Let us verify that the map $\iota$ takes values in $(\A/\k)^{\times}$, i.e. that the class $a+\k$ is invertible in $(\A/\k)^{\times}$ for $a\in \A^\times$. It follows from the observation that $$(a^{-1}+\k)\cdot (a+\k) = \Id +\k = Q(\Id).$$

It is easy to see that $\iota$ is a group homomorphism. Indeed
$$
\iota\big(\pi(a)\big)\cdot \iota\big(\pi(b)\big) = (a + \k)(b+\k) = ab + a\k + \k b + \k = ab + \k = \iota\big(\pi(ab)\big).
$$

To show that $\iota$ is injective, we now only need to check that the only preimage of the unit $\Id_{ (\A/\k)^{\times}} = \Id+ \k $ in $(\A/\k)^{\times}$ is the unit in $G/K = \A^\times/(\Id+\k)^\times$. This is clear since 
$$\iota\big(\pi(a)\big)  = \Id + \k \Leftrightarrow a + \k = \Id + \k$$
implies $\pi(a) = \Id_{G/K}.$

We claim that 
\begin{equation}\label{iotasmooth}
\exp_{ (\A/ \mathfrak k)^{\times}}=\iota\circ \exp_{G/K}
\end{equation}
To prove this, denote with $\widetilde{\exp}$  the exponential map of the $C^*$-algebra $\A/\mathfrak k$, which is also the exponential map $\exp_{ (\A/ \mathfrak k)^{\times}}$ of the Banach--Lie group $(\A/\mathfrak  k)^{\times}$. If $k\in \k$, then for any $a\in\A$ we have $(a+k)^n=a^n+k'$ for some $k'\in \k$, hence $\widetilde{\exp}(Q(a))=Q(\exp(a))$. Thus
\[
\exp_{(\A/ \mathfrak k)^{\times}}(Q(a)) = Q(e^a)=e^a+\mathfrak k.
\]
On the other hand since $\pi_{*1}=Q$ and by means of the naturality of exponential maps of Banach--Lie groups \eqref{natur-exp}
\[
\iota \circ \exp_{G/K}(Q(a))=\iota \circ \pi \circ \exp(a)=\iota(\pi(e^a))=e^a+\mathfrak k,
\]
and this proves \eqref{iotasmooth}. From there we can conclude two things: the first one is that $\iota$ is smooth (see for instance \cite[Lema 3.3.1]{larotonda-geom-book}). The second one is that $\iota_{*1}=\id$ is an isomorphism. Since $\iota$ is injective, the conclusion follows from \cite[Proposici\'on 3.3.4]{larotonda-geom-book}.
\end{proof}

\begin{remark}\label{connected}
    If $\A^{\times}$ and/or $(\A/\mathfrak k)^{\times}$ are connected, the situation simplifies considerably. This happens for instance if $\A$ and/or $\A/\mathfrak k$ are von Neumann algebras: write $g=pu$ the polar decomposition of $g$ in the von Neumann algebra $\A$, with $p=\sqrt{g^*g}$ positive invertible and $u=p^{-1}g$ unitary; both $p,u\in \A$. 
    Since the unitary group of a von Neumann algebra is connected, see \cite[5.29]{douglas}, it remains to show that the positive cone is connected as well. Let $x=x^*\in\A$ be the unique self-adjoint logarithm of $p$ in $\A$. A curve connecting $p$ to $\Id$ is then $e^{tx}$, $t\in [0,1]$. 
\end{remark}

\begin{remark}
Since the identity connected components of $G/K$ and $(\A/\k)^\times$ are isomorphic, so are the tangent bundles to these components. In consequence every vector bundle map (including Nijenhuis operators) on $T(G/K)$ defines a vector bundle map on the identity component of $T(\A/\k)^\times$.
\end{remark}

The easiest example of a $C^*$-algebra is $\A=\mathcal B(\H)$. This situation was presented in \cite[Examples 5.2 and 5.3]{GLT-NN} and will be discussed in this paper in Subsection~\ref{ssec:bounded}. Before presenting other examples in Section~\ref{sec:cstar}, we will consider three classes of vector bundle maps, which would be useful in the sequel.

\subsection{Rank one admissible operator}\label{ssec:rank1}

This construction is a generalization of \cite[Example 5.2]{GLT-NN} to the case of $C^*$-algebra $\A$.

First we define a linear functional $\ell$ on the subspace $S=\C \Id +\K\subset\A$.
Since $\K$ is closed and doesn't contain $\Id$, we observe that the distance $d=dist(\Id,\K)=\inf_{k\in \K} \|k+\Id\|>0$. In consequence $S$ is closed:
\[
\|t \Id + k \|=|t| \|\Id +k'\| \ge |t| d.
\]
We define $\ell$ on $S$
in the following way:
\[ \ell(\K)=0,\qquad \ell(\Id)=1.\] 
It follows that $\ell$ is bounded in $S$. By means of  Hahn--Banach theorem one extends it to a bounded functional on the whole $C^*$-algebra $\A$. This extension will also be denoted by $\ell$. 



Fix $n\in \mathcal B(\A)$ such that $n\notin \k$ and consider a bounded operator $N\in\mathcal B(\A)$ given by
\begin{equation}\label{rank1} N(a) = \ell(a) \cdot n. \end{equation}
Since $\ell$ vanishes on $\mathfrak k$, the condition $N(\mathfrak k)\subset \mathfrak k$ is trivially satisfied. Let $g=\Id+k\in K$, $g^{-1}=\Id+\tilde{k}$ with $k,\tilde{k}\in\mathfrak k$. Then for any $a\in\A$
\[
\Ad_g a = gag^{-1}=a+ka+a\tilde{k}+ka\tilde{k}\in a +\mathfrak k.
\]
Then
\[
\Ad_g N(a) - N(\Ad_g a) = \ell(a) n + k_1 - \ell(a+k_2)n = k_1 \in \k.
\]
Therefore $N\in\lgk$ and thus it gives rise to a homogeneous vector bundle map $\N$ on $G/K$. 

Now condition \eqref{localtor} of Theorem~\ref{torsionJ} tells us that the bundle map generated by $N$ is Nijenhuis if and only if for all $v,w\in \A$ we have
\[
\ell(w)\ell([v,n])+\ell(v)\ell([n,w])-\ell([v,w])\ell(n)=0.
\]
Substituting $w=\Id$ it implies that $\ell([v,n])=0$ for all $v\in \A$, and this in turn implies that $\ell([v,w])\ell(n)=0$ for all $v,w\in \A$. 

There are two possibilities now. If $\ell(n)\ne 0$, $\N$ is a Nijenhuis operator iff
\begin{equation}\label{tor-rank1}
\ell([v,w])=0\qquad \textrm{for all }v,w\in \A.
\end{equation}
On the other hand if we assume that $\ell(n)=0$, the condition is
\begin{equation}\label{tor-rank1n}
\ell([v,n])=0\qquad \textrm{for all }v\in \A.
\end{equation} 
For simplicity we will focus in this paper on the first case, taking for example $n=\Id$.

Let us also mention that vector bundle maps $\N$ of the discussed form never give rise to almost complex structures since they are never invertible. At each fiber the codimension of kernel is equal to $1$.

\subsection{Left and right multiplication}\label{ssec:lr}

Consider another possible choice of the operator $N$ on $\A$: namely right and left multiplication by bounded operators:
\[ N(a)= AaB, \]
for $A,B,a\in \A$. In this case the condition \eqref{eq:admissible1} is automatically satisfied since $\K$ is a two-sided ideal in $\A$.

The condition \eqref{eq:admissible2} for $N$ to be admissible written out explicitly takes the form
\[
(\Id+k)AaB(\Id+\tilde k) - A(\Id+k)a(\Id+\tilde k)B \in \K
\]
for all $k\in\K$ such that $\Id+k\in \A^\times$ and all $a\in\A$, where $\Id+\tilde k = (\Id+k)^{-1}$ with $\tilde{k}\in\K$. Again just like in the previous example, we notice that, after expanding, the terms without $k$ or $\tilde k$ cancel out and all others belong to $\K$. 
Thus the operator $N$ is always admissible and, due to Proposition~\ref{prop:induced_homog_struct}, it descends to the operator $\N$ on the homogeneous space. 

Note that if we choose $A$ and $B$ such that $A^2$, $B^2$ are in the center of $\A$ and moreover $A^2B^2=-\Id$, we get an almost complex structure on $G/K$.

Let us verify using Theorem~\ref{torsionJ} when $\N$ is a Nijenhuis operator. The condition \eqref{localtor} doesn't hold in general (except obviously in the commutative case). However in a simpler case when either $A=\Id$ or $B=\Id$ it is always satisfied. Thus left (or right) multiplication by an element from $\A$ always gives rise to a Nijenhuis operator on the homogeneous space $G/K$. If we choose this operator in  such a way that it's square is $-\Id$, we obtain an integrable complex structure on $G/K$.


\subsection{Adjoint action}\label{ssec:ad}

Now, we define $N$ as $\ad_d$ for some $d\in \A$. It is an admissible operator $N\in\lgk$ if and only if for all $a\in\A$ and $k\in\K$
\[ [k,d] \in \K\] and 
\[ [a,[k,d]] \in \K.\]
Naturally, both conditions are satisfied since $k$ belongs to the ideal $\K$.

The condition \eqref{localtor} for the induced vector bundle map $\N$ to be Nijenhuis in this case can be written as
\[\label{tor-ad}
\big[ [d,a] , [d,b] \big] \in \K
\]
for all $a,b\in\A$. It is satisfied in the commutative case and if $d\in\K$, however in those situations $\N=0$.

It is difficult to say more about $\N$ without specifying the $C^*$-algebra $\A$, so more results will be presented in the next section.


\section{Applications to particular $C^*$-algebras}\label{sec:cstar}
In this paper we will focus on the following (classes of) $C^*$-algebras:
\begin{enumerate}
    \item the $C^*$-algebra of bounded operators $\mathcal B(\H)$ on a separable Hilbert space $\H$
    \item the set of continuous functions $C(X)$ on a compact set $X$ 
    \item Toeplitz algebra
    \item Crossed product algebra related to a dynamical system  $C(X)\rtimes_\alpha \Z$
\end{enumerate}
In the first case the $C^*$-algebra is commutative, which leads to a much simpler structure. Notably every unital commutative $C^*$-algebra is of this form by Gelfand--Naimark theorem.




We will present the characterization of ideals in each case and discuss a possible examples of homogeneous bundle maps, Nijenhuis operators and almost complex structures.

\subsection{$C^*$-algebra of bounded operators}\label{ssec:bounded}

\subsubsection{Calkin algebra.} Let's discuss briefly the most common example of a $C^*$-algebra, i.e. the set of all bounded operators $\mathcal B(\H)$ on a Hilbert space $\H$. This case was discussed in \cite{GLT-NN}. It is known that the only closed two-sided ideal of $\mathcal B(\H)$ for a separable Hilbert space $\H$ is the ideal of compact operators $\mathcal K(\H)$. For this reason there is only one case to discuss here $\k=\mathcal K(\H)$. In this case the $C^*$-algebra $\mathcal B(\H)/\mathcal K(\H)$ is known as the Calkin algebra. 

The invertible elements in the Calkin algebra correspond to Fredholm operators in $\H$. Observe that since compact perturbations do not change the Fredholm index, the Fredholm index is also well defined on the Calkin algebra. Let us describe the map $\iota$ presented in Proposition~\ref{prop:isomorphism} explicitly in this particular case:
$$
\begin{array}{lcll}
\iota: & G/K = \mathcal B(\H)^\times/\big(\Id + \mathcal K(\H)\big)^\times & \longrightarrow & \left(\mathcal B(\H)/\mathcal K(\H)\right)^\times\\
& a\big(\Id + \mathcal K(\H)\big)^{\times} & \longmapsto & a + \mathcal K(\H).
\end{array}
$$
Thus the image of the map $\iota$ consists only of elements of Fredholm index $0$ and thus $\iota$ is not a surjection.

\begin{remark}First we note that $\mathcal B(\H)^{\times}$ is connected since $\mathcal B(H)$ is a von Neumann algebra (Remark \ref{connected}). On the other hand, connected components of the set  of Fredholm operators $\mathcal F(\H)$ are exactly the operators of index $k$, for each $k\in\mathbb Z$ \cite[Theorem 5.36]{douglas}. In particular the operators of index $0$ are exactly the identity component of the group of invertible operators in the Calkin algebra. Hence, by Proposition \ref{prop:isomorphism}, the map $\iota$ is a Banach--Lie group isomorphism between $G/K$ and the identity component of the group of invertible elements in the Calkin algebra.
\end{remark}



\subsubsection{Rank one admissible operators on the Calkin algebra.} We now recall that Halmos showed in \cite[Theorem~1]{halmos-commutators} that any self-adjoint operator is a sum of two commutators of the form $[A, A^*]$, which implies that  the set of all commutators of operators in $\mathcal B(\H)$ spans the whole $\mathcal B(\H)$. Then, the homogeneous vector bundle maps induced by admissible operators on $\mathcal B(\H)$ of rank 1 defined in Subsection~\ref{ssec:rank1} for $\ell(n)\neq 0$ are never Nijenhuis, since condition \eqref{tor-rank1} implies that $\N$ is zero, which contradicts its definition.

Let us consider for a while the case $\ell(n)=0$. Take as $n$ any non-trivial orthogonal projection $P$ on $\H$. The closed space $\mathcal B_{\textnormal{co}}(\H)$ generated by all commutators with $P$ consists of block co-diagonal operators with respect to the decomposition $\H = P\H \oplus (\Id-P)\H$. We can now amend the definition of $\ell$ by making it vanish on the closed space spanned by $\mathcal K(\H)$, $\mathcal B_{\textnormal{co}}(\H)$ and $P$. In such a way the condition \eqref{tor-rank1n} is satisfied and we do obtain a Nijenhuis operator. This case is related to the restricted Grassmannian, see \cite{segal} and also the discussion in \cite[Example 5.4]{GLT-NN}.

\subsubsection{Left and right multiplication on the Calkin algebra}

As mentioned above, left (or right) multiplication by an element from $\mathcal B(\H)$ always gives rise to a Nijenhuis operator on the Calkin algebra.

One can also obtain simple examples of complex structures on the group $G/K$ by considering a decomposition of $\H$ into two orthogonal infinite dimensional subspaces $\H=\H_+\oplus \H_-$. Denote by $A$ an isomorphism of $\H_+$ and $\H_-$ extended to a bounded operator (partial isometry) on $\H$. Then the operator $iA - iA^*$ squares to $-\Id$ and thus multiplication by it (left or right) gives us a complex structure.

\subsubsection{Adjoint operators on the Calkin algebra.} Consider $N$ be defined as the adjoint operator $\ad_d$ for some $d\in\mathcal B(\H)$. It is Nijenhuis iff the following condition holds
\[ \big[[d,a],[d,b]\big] \in \mathcal K(\H) \]
for all $a,b\in\mathcal B(\H)$.
Observe that such an operator would never induce an almost complex structure as it would require
\[ \big[d,[d,x]\big] \in -x + \mathcal K(\H) \]
for all $x\in\mathcal B(\H)$. It would mean that $Q(x)=x+\mathcal K(\H)$ is a commutator of elements in the Calkin algebra. Take for an example $x=-\Id$. From the argument presented in \cite{wintner} it follows that such an operator cannot be equal to any commutator of elements in any Banach algebra.

\subsection{$C^*$-algebra of  continuous functions on a compact set}\label{ssec:continuous}

\subsubsection{Ideals of $C(X)$} In this subsection we will consider $C^*$-algebras of the form $C(X)$, where $X$ is a compact Hausdorff space. It follows from Gelfand--Naimark theorem (see e.g. \cite[4.29]{douglas}) that actually every commutative unital $C^*$-algebra is of this form.

Closed ideals of $C(X)$ are easy to describe, see e.g. \cite[Exercise 2.2]{douglas}. Namely all closed ideals are given by closed subsets $Y\subset X$ in the following way:
\begin{equation}\label{kY}
\k_Y = \{ f \in C(X) | f_{|Y} = 0 \}.
\end{equation}
Note that since compact Hausdorff sets are normal topological spaces, Tietze extension theorem implies that any continuous map on $Y$ can be extended to a continuous map on $X$. Consequently the quotient $C^*$-algebra $C(X)/\k_Y$ is isomorphic to $C(Y)$. Thus in this case the quotient Banach--Lie group $G/K=C(X)^\times/(\Id+\k_Y)^\times$ is actually isomorphic to $C(Y)^\times$ via the map $\iota$ described in Proposition~\ref{prop:isomorphism}. 
Note that the group $(\Id+\k_Y)^\times$ is split if and only if $Y$ is open and closed in $X$.

Moreover due to commutativity all vector bundle maps (not necessarily induced by admissible operators) are Nijenhuis operators.

\subsubsection{Rank one admissible operators on $C(X)$}Let us give an example of Nijenhuis operator induced by an admissible operator on $C(X)$ of rank 1 (as defined in Subsection~\ref{ssec:rank1}, equation~\eqref{rank1}). Consider the  functional $\ell:C(X)\to \C$ given by an evaluation functional $ev_y$ at a point $y\in Y$. In this case the map $N$ (and $\N$ as well) maps a function $f$ to the constant function $f(y)$. More generally,
by Riesz--Markov--Kakutani representation theorem, every functional $\ell:C(X)\to \C$ vanishing on $\k_Y$ and equal to one on $\Id$ is given by the integral with respect to a probabilistic measure with support in $Y$. Hence any Nijenhuis operator in this case is of this form.

\subsubsection{Left and right multiplication on $C(X)$}
The Nijenhuis operators defined via (left or right) multiplication in Subsection~\ref{ssec:lr} act as multiplication by a function $f$ from $C(Y)$ on $TC(Y)^\times\cong C(Y)^\times \times C(Y)$. They can be chosen to define a complex structure if the function $f$ is constant equal to $\pm i$ on each connected component of $Y$. 

\subsubsection{Adjoint operator on $C(X)$}
The operators defined via $\ad_d$ in Subsection~\ref{ssec:ad} are all trivially zero.

\subsection{Toeplitz algebra}\label{ssec:toeplitz}

\subsubsection{Toeplitz operators on Hardy space} Consider the complex Hilbert space $L^2(\mathbb T)$, where $\mathbb T$ denotes the unit circle in $\C$. Denote by $H^2$ the Hardy space, i.e. the closed subspace consisting of functions which can be extended holomorphically to the interior of the unit disc. Alternatively, one can consider the canonical basis of $L^2(\mathbb T)$ consisting of the functions $\chi_n$, $n\in\Z$, defined as $\chi_n(z) = z^n$, $z\in\mathbb T$. The Hardy space $H^2$ can then be seen as a closed space generated by $\{\chi_n\}_{n\geq 0}$. Let $P\in\mathcal B(L^2(\mathbb T))$ be the orthogonal projection onto $H^2$.

\begin{definition}
Let $\varphi\in C(\mathbb T)$. The \emph{Toeplitz operator} $T_\varphi$ on $H^2$ is defined as $T_\varphi f = P(\varphi f)$. The \emph{Toeplitz algebra} $\mathfrak T$ is the $C^*$-algebra generated by all Toeplitz operators.
\end{definition}

We will again consider as the ideal $\k$ the set of compact operators $\k = \mathcal K(H^2)$. 
It is known that $\k\subset \mathfrak T$ and moreover it is equal to the commutator ideal of $\mathfrak T$, see \cite[7.12]{douglas}, \cite[3.5.10]{murphy}. As a consequence the quotient $C^*$-algebra $\mathfrak T/\k$ is abelian and moreover it is $*$-isomorphic to $C(\mathbb T)$ (see e.g. \cite[3.5.11]{murphy}).

\begin{proposition}
The homogeneous space $G/K=\mathfrak T^\times/(\Id+\k)^\times$ with the natural structure of Banach--Lie group modeled on $C(\mathbb T)$ is isomorphic to the set of elements in $C(\mathbb T)^\times$ which have winding number $0$. In consequence, this group is also abelian. 
\end{proposition}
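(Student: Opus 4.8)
The plan is to invoke Proposition~\ref{prop:isomorphism} together with the identification $\mathfrak T/\k\cong C(\mathbb T)$ to reduce the statement to a concrete description of the identity component of $C(\mathbb T)^\times$. First I would note that, by the remarks preceding the proposition, the Banach--Lie algebra of $G/K=\mathfrak T^\times/(\Id+\k)^\times$ is $\mathfrak T/\k$, and since the commutator ideal of $\mathfrak T$ equals $\k$, the quotient $\mathfrak T/\k$ is abelian and $*$-isomorphic to $C(\mathbb T)$; this already gives that the model Banach--Lie algebra is $C(\mathbb T)$. Applying Proposition~\ref{prop:isomorphism}, the map $\iota$ is a Banach--Lie group isomorphism of $G/K$ onto an open subgroup of $(\mathfrak T/\k)^\times\cong C(\mathbb T)^\times$, and the image is precisely the identity component, since $\iota_{*1}=\id$ and the identity component of a Banach--Lie group is open and generated by the image of the exponential map, which already lies in the image of $\iota$ by the naturality relation \eqref{iotasmooth}.

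The core of the argument is then the classical fact that the connected components of $C(\mathbb T)^\times$ are indexed by the winding number (degree) of the map $\mathbb T\to\C^\times$, with the identity component being exactly those invertible functions of winding number $0$. I would recall this via the standard homotopy-theoretic observation: $C(\mathbb T)^\times=\{f\in C(\mathbb T)\mid f(z)\neq 0\ \forall z\}$, and two such functions lie in the same path component if and only if they are homotopic as maps $\mathbb T\to\C^\times\simeq S^1$, i.e. if and only if they have the same winding number; moreover every invertible $f$ with winding number $0$ lifts continuously to $\C$ (a continuous logarithm exists), hence is of the form $e^g$ with $g\in C(\mathbb T)$ and is connected to $\Id$ via $t\mapsto e^{tg}$. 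Conversely $\chi_1(z)=z$ has winding number $1$ and so is not in the identity component, which shows the components are genuinely distinct. Transporting this through $\iota$, the identity component of $G/K$ is isomorphic to the subgroup of winding-number-zero elements of $C(\mathbb T)^\times$; but since $G/K$ is itself a group and $\iota$ a group isomorphism onto this component, $G/K$ \emph{equals} (up to isomorphism) that subgroup — in particular $G/K$ is connected — and it is abelian because $C(\mathbb T)$ is.

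The main obstacle I anticipate is not any single hard step but rather making the component-counting on the $G/K$ side airtight: a priori $\iota$ only identifies $G/K$ with an \emph{open} subgroup of $C(\mathbb T)^\times$, so one must argue that this open subgroup is precisely the identity component and not something larger. The cleanest route is to observe that $\iota(\pi(a))=a+\k$ for $a\in\mathfrak T^\times$, and that an invertible element of $\mathfrak T$ is a Fredholm operator on $H^2$ whose image in $C(\mathbb T)$ is its symbol; the winding number of the symbol is the negative of the Fredholm index (Gohberg--Krein / the index theorem for Toeplitz operators, \cite[7.26]{douglas}, \cite{murphy}). Since $a\in\mathfrak T^\times$ is genuinely invertible (not merely Fredholm), it has index $0$, so its symbol has winding number $0$; conversely every winding-number-zero invertible function is the symbol of an invertible element of $\mathfrak T$. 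This shows the image of $\iota$ is exactly the winding-number-zero subgroup, and since that subgroup is connected (by the homotopy argument above) while $\iota$ is an isomorphism onto it, $G/K$ is connected and hence equals its own identity component, completing the proof. Abelianness then follows immediately from abelianness of $C(\mathbb T)$, or directly from the fact that $[\mathfrak T,\mathfrak T]\subset\k$.
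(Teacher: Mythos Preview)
Your proof is correct and shares the paper's two main ingredients: Proposition~\ref{prop:isomorphism} and the description of the connected components of $C(\mathbb T)^\times$ by winding number. The route differs in how the image of $\iota$ is pinned down. The paper argues first that $\mathfrak T^\times$ is connected (citing \cite[3.5.15]{murphy}: if $T_\varphi$ is invertible then $\varphi=e^{\psi}$, so $t\mapsto T_{e^{t\psi}}$ joins $T_\varphi$ to $\Id$), whence $G/K$ is connected and $\iota$ must land exactly on the identity component. You instead compute the image of $\iota$ directly via the Toeplitz index theorem: an invertible element of $\mathfrak T$ has Fredholm index $0$, so its symbol has winding number $0$, and conversely every winding-number-$0$ symbol lifts to an invertible Toeplitz operator; connectedness of $G/K$ then falls out as a consequence rather than being an input. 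Your argument is more explicit about the image computation and sidesteps a point the paper leaves implicit (its connectedness argument is stated only for pure Toeplitz operators $T_\varphi$, not for general invertible $T_\varphi+K\in\mathfrak T^\times$), at the modest cost of invoking the index formula and Coburn-type surjectivity onto the winding-number-$0$ symbols.
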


\begin{proof}
The claim follows from Proposition~\ref{prop:isomorphism} by noting two things. First that $\mathfrak T^{\times}$ is connected: for the Toeplitz operator $T_{\varphi}$ to be invertible, it must be $\varphi=e^{\psi}$ for some $\psi\in C(\mathbb T)$, see \cite[3.5.15]{murphy}. Thus $\alpha(t)=T_{e^{t\psi}}$ connects $T_{\varphi}$ to the identity $\Id$.
On the other hand the connected components of $C(\mathbb T)^\times$ are indexed by the winding number. 
\end{proof}

This fact simplifies the problem of describing Nijenhuis operators. Namely every vector bundle map on $T(G/K)$ is automatically a Nijenhuis operator (regardless of the fact if it is induced by an admissible operator in $\lgk$). Moreover in the effect, since $G/K$ is a connected component of $C(\mathbb T)^\times$ it reduces the situation to the one described in Subsection~\ref{ssec:continuous}. It is more straightforward to describe homogeneous bundle maps directly on the homogeneous space $C(\mathbb T)^\times$, instead of considering admissible operators on $\mathfrak T$.

\subsubsection{Rank one admissible operators on Toeplitz algebra}

The functional $\ell$ on $\mathfrak T$ defined in Subsection~\ref{ssec:rank1} descends to the quotient $C^*$-algebra $\tilde \ell:\mathfrak T/\mathcal K(H^2)\cong C(\mathbb T)\to \C$. The operator $\N$ is given then by $\N_{p_0}(f) = \tilde\ell(f) \tilde n$ for $\tilde n = \pi_{*\Id}(n)\in C(\mathbb T)$. The functionals $\tilde \ell$ are determined by a choice of a probability measure on $\mathbb T$, just like in the case of the $C^*$-algebras of continuous functions.

\subsubsection{Left and right multiplication on Toeplitz algebra} Consider admissible operators on $\mathfrak T$ defined by left and right multiplication (see Subsection~\ref{ssec:lr}) by some elements $A,B\in \mathfrak T$. Since the commutator of two elements of Toeplitz algebra is always compact, we observe that
\[ N(a) = AaB = ABa + k \]
for some $k\in \mathcal K(H^2)$. Thus a homogeneous bundle map induced by left and right multiplication $a\mapsto AaB$ is the same as the one induced by multiplication from the left $a\mapsto ABa$.

\subsubsection{Adjoint operators on the Toeplitz algebra} Let us focus for a second on admissible operators on $\mathfrak T$ defined by the adjoint operator $\ad_d$ for some $d\in \mathfrak T$. From the fact that the commutator ideal of $\mathfrak T$ is equal to $\k$, we conclude that they induce only trivial zero Nijenhuis operators.



\subsection{Crossed product $C^*$-algebras related to dynamical systems}\label{ssec:crossed}

\subsubsection{Covariant representation and crossed product}
Let $X$ be a compact Hausdorff space and let $\alpha:X\to X$ be a homeomorphism. We will call the pair $(X,\alpha)$ a dynamical system.




Consider the Hilbert space $\H=l^2(X\times \Z)$. On this Hilbert space one can introduce the representation of $C(X)$ by multiplication operators 
\[ (f \cdot a)(x,n) = f(x) a(x,n)\]
and the representation of $\Z$ by composition with $\alpha$ on $X$ and by group multiplication on $\Z$, i.e.
\[ (m \cdot a)(x,n) = a(\alpha^m(x),n+m) \]
for $a\in \H$, $x\in X$, $n,m\in\Z$, $f\in C(X)$. Note that in this section we use the convention that the powers of $\alpha$ are considered with respect to operation of composition, e.g. $\alpha^2 = \alpha\circ\alpha$. 

To simplify the notation we will identify elements of $C(X)$ with multiplication operators acting on $\H$ and we can also define an operator $u\in\mathcal B(\H)$, which induces the action of $\Z$. Since the action is given by a bijection, the operator $u$ is unitary.

Note that the representation of $C(X)$ and the unitary $u$ satisfy
\begin{equation}\label{cov}
 ua = \alpha(a) u \quad \textrm{for }a\in C(X),
\end{equation}
where the action of $\alpha$ on $C(X)$ is defined by precomposition
\[ \alpha(a)(x) = a(\alpha(x)). \]
A pair of representations of $C(X)$ and $\Z$ satisfying \eqref{cov} is called a \emph{covariant representation} of the dynamical system $(X,\alpha)$.

Consider the $C^*$-algebra $\A=C^*(C(X),u)=C(X)\rtimes_\alpha \Z$ generated by the representation of $C(X)$ and the unitary $u$. This is a $C^*$-algebra generated by a covariant representation of the dynamical system $(X,\alpha)$. Since the group under consideration is amenable it is the \emph{crossed product} of $C(X)$ and the group $\Z$ with respect to the homeomorphism $\alpha$, see e.g. \cite{tomiyama-notes}.

Note that for an arbitrary group, the crossed product is the $C^*$-envelope of the algebra of finite formal sums. 
The general theory of crossed products is a very vast subject, see e.g. \cite{pedersen,davidson-cstar,williams-crossed} for an overview. It is useful in the study of dynamical systems, also in the case when $\alpha$ might not be invertible or is only partially defined, see e.g. \cite{tomiyama-notes,giordano1995,silvestrov02,exel03,antonevich-crossed,kwasniewski16,beltita-neeb16,bardadyn24}. 

Moreover if the homeomorphism $\alpha$ is topologically free (i.e. the set of fixed points has empty interior), then one can consider even simpler faithful representation of the crossed product. Assume that there exists an $\alpha$-invariant probabilistic measure $\mu$ on $X$ with $\operatorname{supp}\mu = X$ and let $\tilde\H=L^2(X,\mu)$. In this case we will identify the commutative $C^*$-algebra $C(X)$ with multiplication operators in $\mathcal B(\tilde\H)$ and introduce the operator $\tilde u$ as follows
\[ \tilde uf = f \circ \alpha \]
for $f\in\tilde \H$. Since the measure $\mu$ is $\alpha$-invariant, the operator $\tilde u$ is unitary. It is also a covariant representation of the dynamical system $(X,\alpha)$. In this case the $C^*$-algebra generated by $C(X)$ and $\tilde u$ is isomorphic to the crossed product $C(X)\rtimes_\alpha \Z$.


\subsubsection{Ideals of $C(X)\rtimes_{\alpha} \mathbb{Z}$}
Assume that 
there exists a closed $\alpha$-invariant set $Y\subset X$. 
Define the $C^*$-algebra $\tilde\k_Y=C^*(\k_Y,u)$ generated by $\k_Y$ defined in \eqref{kY} and $u$. It turns out that $\tilde \k_Y$ is a proper ideal in $C(X) \rtimes_\alpha \mathbb Z$, see e.g. \cite[Proposition VIII.3.3]{davidson-cstar}. The ideal $\tilde k_Y$ can be identified with $k_Y\rtimes_\alpha \Z$, see \cite[Lemma 3.17]{williams-crossed}. Moreover following \cite[Corollary 3.20]{williams-crossed} we conclude that the quotient $C^*$-algebra $\big(C(X) \rtimes_\alpha \mathbb Z\big) / \tilde \k_Y$ can be identified with $C(Y)\rtimes_{\alpha_{|Y}} \Z$.

It is easy to see moreover that choosing another $\alpha$-invariant subset $Y'\subset Y$ we get the ideal $\tilde\k_{Y'}\supset \tilde\k_Y$. In this way we have a whole lattice of ideals generated by $\alpha$-invariant sets with lattice operations given by union and intersection of invariant sets, which encodes the dynamic of $\alpha$.

The description of the groups of invertible is an open problem and partial results for particular classes of crossed product algebras are known, for example the problem of density of $\A^\times$ is related to stable rank of crossed product, see e.g. \cite{poon89}.

Let us now discuss homogeneous bundle maps induced by admissible operators $N\in\lgk$ on the $C^*$-algebra $C(X)\rtimes_\alpha \Z$.
First, observe that if $Y$ consists only of fixed points of $\alpha$, i.e. $\alpha$ is identity on $Y$, the crossed product $C(Y)\rtimes_{\alpha_{|Y}} \Z$ trivializes to $C(Y)$. In this situation all homogeneous bundle maps are Nijenhuis, like mentioned in Subsection~\ref{ssec:continuous}. 
For the remainder of this subsection we will be assuming that $Y$ does not consists only of fixed points of $\alpha$.

\subsubsection{Rank one admissible operators on the crossed product $C(X)\rtimes_{\alpha} \mathbb{Z}$}

We consider now homogeneous bundle maps constructed via the functional $\ell$ in Subsection~\ref{ssec:rank1}. The functional $\ell$ needs to vanish on $\tilde \k_Y$. By linearity and continuity we need to define it on the elements of the form $f u^k$, for $f\in C(X)$, $k\in\Z$. Using the same argument as in Subsection~\ref{ssec:continuous}, we conclude that $\ell(f u^k)$ is given by the integral with respect to a finite measure $\mu_k$ such that $\operatorname{supp}\mu_k\subset Y$. Thus the functional $\ell$ is uniquely determined by a series of measures $\{\mu_k\}_{k\in\Z}$ such that $\mu_0$ is additionally probabilistic (to ensure that $\ell(\Id) = 1$).

Let us investigate when an operator $\N$ defined via $\ell$ is Nijenhuis, for $\ell(n)\neq 0$. From condition \eqref{tor-rank1} using linearity and continuity again we see that it is equivalent to
\[[fu^k,gu^n]=(f(g\circ\alpha^k) - g(f\circ\alpha^n))u^{n+k}\in \tilde\k_Y\] 
for all $f,g\in C(X)$ and $k,n\in \Z$. Taking as a special case $g=\Id\in C(X)$ (i.e. the constant function equal $1$) and $n=1$, we note that the condition requires $f(y)-f(\alpha(y)) = 0$ for all $f\in C(X)$ and $y\in Y$. That implies that $\N$ is never Nijenhuis.

\subsubsection{Left and right multiplication on the crossed product $C(X)\rtimes_{\alpha} \mathbb{Z}$}For the admissible operators given by left and right multiplication, see Subsection~\ref{ssec:lr}, note that we already demonstrated that left multiplication and right multiplication considered separately are always Nijenhuis. Thus an operator e.g. of the form 
\[ f u^k \longmapsto u (f u^k) = \alpha(f) u^{k+1} \]
is a Nijenhuis operator on the space $G/K$.

\subsubsection{Adjoint operators on the crossed product $C(X)\rtimes_{\alpha} \mathbb{Z}$} Consider an admissible operator $N=\ad_d$, $d\in\C(X)\rtimes_\alpha\Z$, described in Subsection~\ref{ssec:ad}. Let us choose $d=u$. In this case we have
\[ N(f u^k) = [u, fu^k] = (\alpha(f) - f)u^{k+1}. \]
This operator acts on the coefficients in formal sums as a difference operator with respect to the homeomorphism $\alpha$. In order to verify if it is Nijenhuis we use the condition \eqref{tor-ad}. It is not difficult to see that this condition cannot be satisfied since we have 
\[
\big[[u,f],[u,g]\big] = \big[ (\alpha(f) - f)u, (\alpha(g) - g)u \big] = \]
\[\big((\alpha(f) - f)(\alpha^2(g) - \alpha(g))- (\alpha(g) - g)(\alpha^2(f) - \alpha(f))\big) u^2,
\]
and it is easy to obtain functions $f$ and $g$ such that the coefficient function in front of $u^2$ in $\big[[u,f],[u,g]\big]$ would not vanish at some $y\in Y$. In consequence vector bundle maps induced by $\ad_u$ are not Nijenhuis.


\subsection{Summary of the results}

Let us present two tables to visually summarize some of the results. The first table shows whether the discussed admissible operators give rise to Nijenhuis operators for the discussed classes of $C^*$-algebras. In some cases the question is still open.
\begin{table}[!h]
\centering
\begin{tabular}{|c|c|c|c|} \hline
     & Rank one & LR mult. & adjoint  \\ \hline
 bounded    & -/+ & ?/+ & ?  \\ \hline
 continuous & + & + & 0  \\ \hline
 Toeplitz & + & + & 0  \\ \hline
 crossed (fixed) & + & + & 0  \\ \hline
 crossed (non-fixed) & - & ?/+ & -/?  \\ \hline
\end{tabular}
\caption{Existence of Nijenhuis operators: in each case the existence of Nijenhuis operators is marked by a $+$-sign, the non-existence of Nijenhuis operators is marked by a $-$-sign, and the fact that all Nijenhuis operators are identically zero is marked by a $0$. A question marked is used when the existence of Nijenhuis operators is an open question for some particular situation (see the corresponding sections).}
\end{table}

The second table is meant to visualize the possibility of finding an almost complex structure within the described classes. One open question still remains.
\begin{table}[!h]
\centering
\begin{tabular}{|c|c|c|c|} \hline
     & Rank one & LR mult. & adjoint  \\ \hline
 bounded    & - & + & -  \\ \hline
 continuous & - & + & -  \\ \hline
 Toeplitz & - & + & -  \\ \hline
 crossed (fixed) & - & + & -  \\ \hline
 crossed (non-fixed) & - & + & ?  \\ \hline
\end{tabular}
\caption{Possibility of almost complex structures: in each case the fact that an almost-complex structure can be constructed using the considered class of Nijenhuis operators is marked by a $+$-sign, when no almost-complex structure can be constructed we use a $-$-sign, and when the question is still open we use a $?$-sign.}
\end{table}

\section*{Acknowledgments}

This research was partially supported by ANPCyT, CONICET, UBACyT 20020220400256BA (Argentina) and 2020 National Science Centre, Poland/Fonds zur Förderung der wissenschaftlichen Forschung, Austria grant ``Banach Poisson--Lie groups and integrable systems'' 2020/01/Y/ST1/00123, I-5015N.

The authors are grateful to Bartosz Kwaśniewski for his advice and help while writing this paper.


\end{document}